\documentclass{amsart}
\usepackage{amsfonts,amsmath,amsthm,amssymb, amscd}

\newtheorem{theorem}{Theorem}
\newtheorem{lemma}{Lemma}

\def\m1{^{-1}}

\DeclareMathOperator{\rad}{\mathfrak{rad}}

\begin{document}

\title{Filtered multiplicative bases of restricted enveloping algebras}
\author{V.~BOVDI, A.~GRISHKOV, S.~SICILIANO}

\address{
\texttt{VICTOR BOVDI},
\newline
University of Debrecen,
H--4010 Debrecen, P.O. BOX 12, Hungary}
\email{vbovdi@math.klte.hu}

\address{
\texttt{ALEXANDER GRISHKOV}
\newline
IME, USP, Rua do Matao, 1010 -- Citade Universit\`{a}ria, CEP
05508-090, Sao Paulo, Brazil} \email{shuragri@gmail.com}

\address{
\texttt{SALVATORE SICILIANO},
\newline
Dipartimento di Matematica ``E. De Giorgi",
Universit\`{a} del Salento,
Via Provinciale Lecce--Arnesano, 73100--LECCE, Italy}
\email{salvatore.siciliano@unile.it}

\thanks{\emph{2000 Mathematics Subject Classification}. Primary 16S30-17B50}
%\subjclass {Primary 16S30--17B50}

\keywords{Filtered multiplicative basis, restricted enveloping
algebra}
\thanks{The research was supported by OTKA  No.K68383, RFFI 07-01-00392A, FAPESP and CNPq(Brazil)}

\begin{abstract}
We study the problem of the existence of filtered multiplicative
bases of a restricted enveloping algebra $u(L)$, where $L$
is a finite-dimensional and $p$-nilpotent restricted Lie
algebra over a field of positive characteristic $p$.
\end{abstract}

\maketitle{}

\section{Introduction  and results}

Let $A$ be a finite--dimensional associative algebra over a field
$F$. Denote by $\rad(A)$ the Jacobson radical of $A$ and let
$\mathfrak{B}$ be an $F$--basis of $A$. Then $\mathfrak{B}$ is
called a \emph{filtered multiplicative basis} (f.m. basis) of $A$ if the
following properties hold:

\begin{enumerate}
\item [(i)] for every $b_1,b_2 \in \mathfrak{B}$ either $b_1b_2=0$
or $b_1b_2 \in \mathfrak{B}$;
\item [(ii)] $\mathfrak{B}\cap\rad(A)$ is an $F$--basis of $\rad(A)$.
\end{enumerate}
Filtered multiplicative bases   arise in the theory of
representation of associative algebras and were introduced by
H.~Kupisch in \cite{Kupisch_I}. In their celebrated paper
\cite{Bautista_Salmeron} R.~Bautista, P.~Gabriel, A.~Roiter and
L.~Salmeron proved that if $A$ has finite representation type
(that is, there are only finitely many isomorphism classes of
finite-dimensional indecomposable $A$--modules) over an algebraically closed
field $F$, then $A$ has an f.m. basis.

In \cite{Gabriel_Roiter} an analogous statement was proposed for
finitely spaced modules over an aggregate. (Such  modules
give rise to a matrix problem in which the allowed column
transformations are determined by the  module structure, the row
transformations are arbitrary, and the number of canonical
matrices is finite).  This statement was subsequently proved in \cite{Roiter_Sergeichuk}.

The problem of existence of an f.m. basis in a group algebra was
posed in \cite{Bautista_Salmeron} and  has been considered by
several authors: see e.g. \cite{Balogh_II, Balogh_I, Bovdi_I,Bovdi_II, Bovdi_IE,
Landrock_Michler}. In particular, it is still an open problem whether
a group algebra $KG$  has  an f.m. basis in the case when  $F$
is a field  of odd characteristic $p$ and $G$ is a nonabelian
$p$-group.

Apparently, not much is known about the same problem in the setting of
restricted enveloping algebras.
The present paper  represents  a contribution in this  direction.
In particular, because of the analogy with the theory of
finite $p$-groups, we confine our attention to the class
$\mathfrak{F}_p$ of finite--dimensional and $p$--nilpotent
restricted Lie algebras over a field of positive characteristic $p$.
Note that under this assumption, the
aforementioned result in \cite{Bautista_Salmeron} can be applied  only in
very special cases. Indeed, for $L\in \mathfrak{F}_p$,  from
\cite{Feldvoss_Strade} it follows that $u(L)$  has finite
representation type if and only if $L$ is cyclic, that is, there
exists an element which generates $L$ as a restricted subalgebra.
\newpage

Our main results are the following three theorems.
\begin{theorem}\label{T:1}
Let $L\in \mathfrak{F}_p$ be an
abelian restricted Lie algebra over a field $F$. Then $u(L)$
has a filtered multiplicative basis if and only if $L$ decomposes
as a direct sum of cyclic restricted subalgebras. In particular,
if $F$ is a perfect field, then $u(L)$ has a filtered
multiplicative basis.
\end{theorem}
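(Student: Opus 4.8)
The plan is to prove the two implications of the equivalence separately and then deduce the perfect-field assertion from the forward one. For the easy direction, suppose $L=\gp{x_1}\oplus\cdots\oplus\gp{x_d}$ with each $\gp{x_i}$ cyclic of dimension $k_i$. Since $u$ turns direct sums of restricted Lie algebras into tensor products, and a cyclic subalgebra satisfies $u(\gp{x_i})\cong F[t_i]/(t_i^{p^{k_i}})$ (because $x_i^{p^j}=x_i^{[p]^j}$ in $u(L)$, so $u(\gp{x_i})$ is generated by the single nilpotent $x_i$), I obtain
\[
u(L)\cong \bigotimes_{i=1}^d F[t_i]/(t_i^{p^{k_i}}) \cong F[t_1,\dots,t_d]/(t_1^{p^{k_1}},\dots,t_d^{p^{k_d}}).
\]
The nonzero monomials $\{t_1^{a_1}\cdots t_d^{a_d}:0\le a_i<p^{k_i}\}$ then form a filtered multiplicative basis: a product of monomials is again a monomial or zero, and the monomials of positive degree are a basis of the augmentation ideal, which for a $p$-nilpotent $L$ is exactly $\rad(u(L))$.

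For the converse I would first extract the algebra structure forced by a filtered multiplicative basis $\mathfrak{B}$. Writing $\omega=\rad(u(L))$, note that $u(L)$ is a commutative local algebra with $u(L)/\omega\cong F$, so the identity is the only basis element outside $\omega$ and $\mathfrak{B}\setminus\{1\}=\mathfrak{B}\cap\omega$. Property (i) gives by induction that $\mathfrak{B}\cap\omega^n$ is a basis of $\omega^n$ for every $n$, so $\mathfrak{B}$ is compatible with the radical filtration. Let $b_1,\dots,b_d\in\mathfrak{B}\cap(\omega\setminus\omega^2)$ be the degree-one elements; their images form a basis of $\omega/\omega^2$, hence by Nakayama they generate $u(L)$ as an algebra, and multiplicativity forces $\mathfrak{B}$ to consist precisely of the nonzero monomials in the $b_i$. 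Thus $u(L)\cong F[t_1,\dots,t_d]/I$ with $I$ a \emph{monomial} ideal. Now $u(L)$, being a finite-dimensional Hopf algebra, is a Frobenius algebra, and being local it has a one-dimensional socle; for a monomial ideal this forces the standard monomials to have a unique maximal element, so $I=(t_1^{n_1},\dots,t_d^{n_d})$ is a ``box'' and $u(L)\cong\bigotimes_i F[t_i]/(t_i^{n_i})$. Finally $\prod_i n_i=\dim_F u(L)=p^{\dim_F L}$ with each $n_i\ge 2$, whence each $n_i=p^{k_i}$ and $u(L)$ is a truncated polynomial algebra.

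It remains to pass from this algebra isomorphism back to a decomposition of $L$ itself, and this is where I expect the real difficulty to lie. The idea is to lift the generators to \emph{primitive} elements: choose $x_i\in L$ with $x_i\equiv b_i\pmod{\omega^2}$, which by the restricted analogue of Nakayama generate $L$ as a restricted subalgebra. Since the Frobenius map $a\mapsto a^p$ is an algebra endomorphism of $u(L)$ restricting to the $p$-operation on $L$, the element $x_i^{[p]^j}=x_i^{p^j}$ has leading term $b_i^{p^j}$ in degree $p^j$ for $j<k_i$; consequently the $\sum_i k_i=\dim_F L$ elements $\{x_i^{[p]^j}:0\le j<k_i\}$ are linearly independent and form a basis of $L$, and provided in addition each tail $x_i^{[p]^{k_i}}$ vanishes, this basis exhibits $L=\bigoplus_i\gp{x_i}$ as a direct sum of cyclic subalgebras. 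The crux is therefore to realize the truncation exponents by primitive elements, i.e. to correct each $x_i$ by an element of $L\cap\omega^2=L^{[p]}$ so that the tail $x_i^{[p]^{k_i}}$ is killed; over an imperfect field this amounts to solving equations that demand the extraction of $p$-th roots, and it is exactly here that the truncated-polynomial structure must be used (and exactly here that a non-cyclic indecomposable of $(L,{}^{[p]})$ would obstruct the splitting). Once this step is settled, the perfect-field statement is immediate: when $F$ is perfect the $p$-operation is Frobenius-conjugate to an ordinary nilpotent operator, equivalently $(L,{}^{[p]})$ is a finitely generated torsion module over the two-sided principal ideal domain $F[\phi;\,a\mapsto a^p]$, so it decomposes into cyclic summands unconditionally and $u(L)$ always admits a filtered multiplicative basis.
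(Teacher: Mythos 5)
Your forward implication and the final perfect-field remark are correct and agree with the paper. The converse, however, contains a genuine gap: you stop exactly where the real proof begins. Even granting your reduction to an algebra isomorphism $u(L)\cong F[t_1,\dots,t_d]/(t_1^{p^{k_1}},\dots,t_d^{p^{k_d}})$, you must still realize the truncation exponents by \emph{primitive} generators, i.e.\ correct each lift $x_i\in L$ of $b_i$ by an element of $L\cap\omega^2=\Phi(L)$ so that $x_i^{[p]^{k_i}}=0$. You correctly observe that this ``amounts to solving equations that demand the extraction of $p$-th roots'' and then write ``once this step is settled'' --- but it is never settled, and no soft argument will settle it: the paper's Example following Theorem~\ref{T:1} (the algebra $L_\alpha$ with $x^{[p]}=\alpha z$, $y^{[p]}=z$, $\alpha\notin F^p$) shows the obstruction is real and depends on the arithmetic of $F$. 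The published proof spends essentially all of its effort on precisely this point: it inducts on $e=\max\{i\mid L^{[p]^i}\neq 0\}$, proves that the associative ideal $J$ generated by $b_1^{p^e},\dots,b_m^{p^e}$ equals $L^{[p]^e}u(L)$ so that the inductive hypothesis applies to $u(L)/J\cong u(L/L^{[p]^e})$, and then exploits the multiplicativity of $\mathfrak{B}$ (via the identity $w_s^{p^{f_s}}=\sum_b \mu_b^{p^{f_s}}b^{p^{f_s}}$) to show that the coefficients $\mu^{(s)}_{i,j}$ expressing $x_s^{p^{f_s}}$ over a maximal decomposable restricted subalgebra $H$ all lie in $F^{p^{f_s}}$ --- which is exactly the $p$-th-root extraction your sketch requires and does not supply.

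A secondary issue: your claim that $\mathfrak{B}$ consists precisely of the nonzero monomials in $b_1,\dots,b_d$ does show every basis element is such a monomial, but it does not show the defining ideal is a \emph{monomial} ideal. Two distinct exponent vectors could a priori give the same element of $u(L)$, so the kernel of $F[t_1,\dots,t_d]\to u(L)$ could contain binomials $t^a-t^{a'}$; property (F-II) does not exclude this, and your socle argument for the ``box'' shape is only valid for genuinely monomial ideals. This step needs an additional counting or filtration argument. But even if repaired, it only delivers the algebra isomorphism, not the decomposition of $L$ itself, so the principal gap above remains.
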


A restricted Lie algebra ${\mathfrak L}\in \mathfrak{F}_p$
is called  \emph{powerful} (see e.g. \cite{Siciliano_Weigel}) if $p=2$ and
${\mathfrak L}^\prime \subseteq {\mathfrak L}^{[p]^2}$ or
$p>2$ and ${\mathfrak L}^\prime \subseteq {\mathfrak L}^{[p]}$.
Here ${\mathfrak L}^{[p]^i}$ denotes the restricted subalgebra
generated by the elements $x^{[p]^i}$, $x \in L$.

\begin{theorem}\label{T:2}
Let $L\in \mathfrak{F}_p$ be a nonabelian restricted Lie
algebra over a field $F$. If $L$ is powerful then $u(L)$ does
not have a filtered multiplicative basis.
\end{theorem}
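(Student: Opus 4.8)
The plan is to argue by contradiction: assuming $u(L)$ admits a filtered multiplicative basis $\mathfrak{B}$, I will show that $L$ must be abelian. Throughout write $\omega=\omega(L)$ for the augmentation ideal, which coincides with $\rad(u(L))$ since $L$ is finite-dimensional and $p$-nilpotent, so that $u(L)$ is local and $\omega$ is nilpotent. First I would record the standard consequence of axioms (i)--(ii): a filtered multiplicative basis refines the radical filtration, that is, $\mathfrak{B}\cap\omega^i$ is an $F$-basis of $\omega^i$ for every $i$. Indeed $\omega^i$ is spanned by products of $i$ elements of $\mathfrak{B}\cap\omega$, each of which is by (i) either $0$ or again a basis element; hence $\mathfrak{B}\cap\omega^i$ spans $\omega^i$ and, being part of a basis, is a basis. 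Consequently the elements of $\mathfrak{B}\cap(\omega^i\setminus\omega^{i+1})$ map to a basis of $\omega^i/\omega^{i+1}$, and in particular distinct such elements have linearly independent images there.

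Next I would fix $b_1,\dots,b_d\in\mathfrak{B}\cap(\omega\setminus\omega^2)$, whose images form a basis of $\omega/\omega^2\cong L/(L'+L^{[p]})$, and choose $x_i\in L$ with $b_i\equiv x_i\pmod{\omega^2}$. Two computations are decisive. First, commutators of generators lie deep in the filtration: since $b_i-x_i\in\omega^2$ one gets $[b_i,b_j]\equiv[x_i,x_j]\pmod{\omega^3}$ with $[x_i,x_j]\in L'$, and the powerful hypothesis forces $L'\subseteq\omega^3$, because for $p>2$ one has $L'\subseteq L^{[p]}\subseteq\omega^p\subseteq\omega^3$, while for $p=2$ the stronger condition gives $L'\subseteq L^{[p]^2}\subseteq\omega^4\subseteq\omega^3$. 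Hence $[b_i,b_j]\in\omega^3$, that is, $\overline{b_ib_j}=\overline{b_jb_i}$ in $\omega^2/\omega^3$ for all $i,j$. Second, the off-diagonal products are independent: using the well-known isomorphism $\mathrm{gr}_\omega u(L)\cong u(\mathrm{gr}\, L)$ for the associated graded restricted Lie algebra, whose degree-one component is $\mathfrak{g}_1\cong\omega/\omega^2$, the same computation shows $[\mathfrak{g}_1,\mathfrak{g}_1]=0$, so $\mathfrak{g}_1$ is abelian and the degree-two products $\bar b_i\bar b_j$ with $i<j$ are part of a PBW basis of $u(\mathrm{gr}\, L)$. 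Thus the images $\overline{b_ib_j}$ with $i<j$ are nonzero and linearly independent in $\omega^2/\omega^3$.

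Combining the two, for $i<j$ both $b_ib_j$ and $b_jb_i$ are nonzero, lie in $\omega^2\setminus\omega^3$, hence belong to $\mathfrak{B}$, and have equal nonzero images in $\omega^2/\omega^3$; since distinct elements of $\mathfrak{B}\cap(\omega^2\setminus\omega^3)$ have linearly independent images there, I conclude $b_ib_j=b_jb_i$. Therefore the generators $b_1,\dots,b_d$ commute pairwise. A Nakayama-type argument then shows that the subalgebra $S$ generated by the $b_i$ contains $\omega$, hence equals $u(L)$: from $\omega=S+\omega^2$ one iterates to $\omega\subseteq S+\omega^k$ for all $k$ and uses nilpotency of $\omega$. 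So $u(L)$ is commutative, forcing $L$ abelian, a contradiction. I expect the delicate point to be the independence of the off-diagonal products in $\omega^2/\omega^3$: this is where the precise form of the powerful hypothesis enters, and where characteristic $2$ is genuinely different, since there the diagonal products $b_i^2$ also sit in degree two, and the condition $L'\subseteq L^{[p]^2}$ is exactly what keeps $L'$ inside $\omega^3$ so that the commutativity conclusion still propagates.
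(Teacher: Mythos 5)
Your argument is correct, and its engine is the same as the paper's: the powerful hypothesis pushes $L'$ into $\omega(L)^3$ (via $L'\subseteq L^{[p]}\subseteq\omega(L)^p$ for $p>2$, resp.\ $L'\subseteq L^{[p]^2}\subseteq\omega(L)^4$ for $p=2$), so the degree--one basis elements commute modulo $\omega(L)^3$; their pairwise products survive modulo $\omega(L)^3$; and property (F-II) upgrades the congruence $b_ib_j\equiv b_jb_i$ to an equality, forcing $u(L)$ to be commutative and $L$ abelian. Where you diverge is in the implementation of the ``products survive'' step. The paper isolates the argument in a general statement about nilpotent associative algebras (Lemma~\ref{L:2}): it fixes a minimal generating set $u_1,\dots,u_n$ of $\omega(L)$ coming from $L$, writes $b_k\equiv\sum_i\alpha_{ki}u_i\pmod{\omega(L)^2}$, and shows $b_rb_s\notin\omega(L)^3$ by observing that otherwise all order--two minors of rows $r,s$ of the invertible matrix $(\alpha_{ki})$ would vanish; this requires hypotheses on both the off-diagonal products $u_iu_j$ and the squares $u_i^2$ modulo $\omega(L)^3$. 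You instead normalize so that $b_i\equiv x_i\pmod{\omega(L)^2}$ with the $x_i$ extendable to a basis adapted to the dimension subalgebras, and read the non-vanishing of $\overline{b_ib_j}$ ($i<j$) directly off Lemma~\ref{L:1} (equivalently, the graded PBW theorem $\mathrm{gr}\,u(L)\cong u(\mathrm{gr}\,L)$). This buys two things: you avoid the determinant computation altogether, and you never touch the diagonal products $b_i^2$ --- which is exactly the point where the hypotheses of Lemma~\ref{L:2} are most delicate when $p=2$, since there $u_i^2=u_i^{[2]}$ can lie in $\omega(L)^3$ (for instance when $u_i^{[2]}=0$). The paper's lemma is more general in scope, but for the theorem at hand your direct route is cleaner and arguably more robust. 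One cosmetic remark: you do not need the full linear independence of the off-diagonal images $\overline{b_ib_j}$ in $\omega(L)^2/\omega(L)^3$; once each is nonzero, (F-II) applied to the pair $b_ib_j,\,b_jb_i$ already yields equality.
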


\begin{theorem}\label{T:3}
Let $L\in \mathfrak{F}_p$ be a restricted Lie algebra over a
field $F$. If $L$ has nilpotency class 2 and $p>2$ then $u(L)$ does not have a filtered
multiplicative basis.
\end{theorem}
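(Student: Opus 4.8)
The plan is to argue by contradiction: assume $u(L)$ admits a filtered multiplicative basis $\mathfrak{B}$ and exploit the interplay between the multiplicative constraint, the class-$2$ commutator relations, and the hypothesis $p>2$. Write $\omega=\rad(u(L))$ for the augmentation ideal. As is standard for filtered multiplicative bases, conditions (i) and (ii) force $\mathfrak{B}\cap\omega^m$ to be an $F$-basis of $\omega^m$ for every $m$; in particular every element of $\mathfrak{B}\cap\omega$ is a product of the ``degree-one'' basis elements $u_1,\dots,u_d\in\mathfrak{B}\cap(\omega\setminus\omega^2)$, whose images form a basis of $\omega/\omega^2\cong L/(L'+L^{[p]})$. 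First I would dispose of the case in which $L$ is powerful by invoking Theorem \ref{T:2}, so that we may assume $L'\not\subseteq L^{[p]}$.

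Second, I would set up the relevant graded data. For a class-$2$ algebra with $p>2$ the restricted dimension subalgebras satisfy $D_2=L\cap\omega^2=L'+L^{[p]}$ and $D_3=L\cap\omega^3=L^{[p]}$, hence $D_2/D_3\cong L'/(L'\cap L^{[p]})\neq0$. Since $L'$ is spanned by commutators of generators, I can choose the labelling so that $c:=u_1u_2-u_2u_1$ has nonzero image in $D_2/D_3\subseteq\omega^2/\omega^3$; thus $c\in\omega^2\setminus\omega^3$. Passing to the associated graded restricted Lie algebra $\operatorname{gr}L=\bigoplus_n D_n/D_{n+1}$ (which is generated in weight one and again has class $\le 2$), the leading term $\bar c\in D_2/D_3$ is \emph{central}, because $[L',L]=0$.

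The heart of the argument is a single relation. Using that $c$ is central modulo $\omega^4$ (its leading term is central in $\operatorname{gr}L$), a direct computation gives
\[
u_1^2u_2-2\,u_1u_2u_1+u_2u_1^2=u_1c-cu_1=[u_1,c]\in\omega^4 .
\]
Set $A=u_1^2u_2$, $B=u_1u_2u_1$, $C=u_2u_1^2$; each is either $0$ or an element of $\mathfrak{B}\cap\omega^3$, so its image in $\omega^3/\omega^4$ is either $0$ or a basis vector. The displayed identity yields $\bar A-2\bar B+\bar C=0$ in $\omega^3/\omega^4$, and here the hypothesis $p>2$ is decisive: since $2\neq0,1$, no nontrivial linear dependence can hold among distinct basis vectors, nor can two of them be proportional, so the relation forces $\bar A=\bar B=\bar C$. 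In particular $\bar A-\bar B=\bar u_1\bar c=0$, that is, $u_1c\in\omega^4$.

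Finally I would derive the contradiction from the structure of the graded algebra. By the restricted analogue of Quillen's theorem one has $\operatorname{gr}_\omega u(L)\cong u(\operatorname{gr}L)$, so by the PBW theorem the product of the two linearly independent homogeneous elements $\bar u_1$ (weight $1$) and $\bar c$ (weight $2$) of $\operatorname{gr}L$ is a nonzero monomial; that is, $\bar u_1\bar c\neq0$ in $\omega^3/\omega^4$, contradicting $u_1c\in\omega^4$. The main obstacle I anticipate is precisely this last step: one must guarantee that $c$ has weight exactly $2$ (which is why the powerful case must be excluded) and that its leading term multiplies $\bar u_1$ nontrivially in the graded algebra, for which the identification $\operatorname{gr}_\omega u(L)\cong u(\operatorname{gr}L)$ together with PBW is essential. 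A secondary point requiring care is the reduction showing that $\mathfrak{B}$ consists of monomials in $u_1,\dots,u_d$ compatible with the filtration by powers of $\omega$.
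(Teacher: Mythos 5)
Your proof is correct and follows essentially the same route as the paper's: after excluding the powerful case via Theorem~\ref{T:2}, both arguments pick a commutator of two degree-one basis elements whose image in $\mathfrak{D}_2(L)/\mathfrak{D}_3(L)$ is nonzero, form the three weight-three products (your $u_1^2u_2,\ u_1u_2u_1,\ u_2u_1^2$ versus the paper's $b_rb_s^2,\ b_sb_rb_s,\ b_s^2b_r$), and use the resulting $1,-2,1$ dependence modulo $\omega^4$ together with $p>2$ and the fact that products of basis elements are again basis elements or zero to contradict the PBW/dimension-subalgebra description of $\omega^3/\omega^4$. Your phrasing via $\operatorname{gr}_\omega u(L)\cong u(\operatorname{gr}L)$ is just a repackaging of the paper's Lemma~\ref{L:1}.
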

An example showing that Theorem \ref{T:3} fails in characteristic 2 is also provided.
Finally, we remark that for odd $p$
no example of noncommutative restricted enveloping algebra
having an f.m. basis seems to be known.

\medskip

\section{Preliminaries}

Let $A$ be a finite--dimensional associative algebra over a field
$F$. If $\mathfrak{B}$ is an f.m. basis of $A$ then the following
simple properties hold (see \cite{Bovdi_I}):
\begin{itemize}
\item[(F-I)] $\mathfrak{B}\cap \rad(A)^n$ is an $F$--basis of
$\rad(A)^n$ for every $n\geq 1$;
\item[(F-II)] if $u,v\in \mathfrak{B} \backslash \rad(A)^k$ and
$u\equiv v\pmod{\rad(A)^k}$
then $u=v$.
\end{itemize}

Let $L$ be a restricted Lie algebra over a field $F$ of
characteristic $p>0$ with a $p$--map $[p]$. We denote by $\omega(L)$ the
\emph{augmentation ideal} of $u(L)$, that is, the associative
ideal generated by $L$ in $u(L)$.  In \cite{Riley_Shalev_II},
the \emph{ dimension subalgebras} of $L$ were defined as  the
restricted ideals of $L$ given by
$$
\mathfrak{D}_m(L)=L\cap \omega(L)^m \qquad \quad (m\geq 1).
$$
These subalgebras can be  explicitly described as
$\mathfrak{D}_m(L)=\sum_{ip^{j}\geq m}{{\gamma}_{i}(L)}^{[p]^{j}}$,
where ${\gamma}_{i}(L)^{[p]^j}$ is the restricted subalgebra of
$L$ generated by the set of $p^j$--th powers of the $i$--th term
of the lower central series of $L$. Note that $\mathfrak{D}_2(L)$
coincides with the Frattini restricted subalgebra $\Phi(L)$ of $L$ (cf.
\cite{Lincoln_Towers}).

It is well-known that if $L$ is finite--dimensional and
$p$--nilpotent then $\omega(L)$
is nilpotent.
Clearly, in this case $\omega(L)$  coincides with  $\rad(u(L))$ and
$u(L)=F\cdot 1\oplus \omega(L)$. Consequently, if $u(L)$ has
an  f.m. basis $\mathfrak{B}$, then we can assume without loss of
generality that $1\in \mathfrak{B}$. For each $x\in L$, the
largest subscript $m$ such that $x \in \mathfrak{D}_m(L)$ is
called the \emph{height} of $x$ and is denoted by $\nu(x)$. The
combination of Theorem $2.1$ and Theorem $2.3$ from
\cite{Riley_Shalev_II} gives the following.

\begin{lemma}\label{L:1}
Let $L\in \mathfrak{F}_p$ be a restricted Lie algebra over a field $F$,
and let $\{x_i \}_{i\in I}$ be an ordered basis  of $L$ chosen such  that
$$
\mathfrak{D}_m(L)=\mathrm{span}_F \{ x_i\;  \vert\;  \nu(x_i)\geq m\}\qquad  (m\geq 1).
$$
Then for each positive integer $n$ the following statements hold:
\begin{enumerate}
\item [(i)] $\omega(L)^n=\mathrm{span}_F \{x\; \vert \;\nu(x)\geq n \}$, where\quad  $x=x^{\alpha_1}_{i_1}\cdots
x^{\alpha_l}_{i_l}$,\newline
$\nu(x)=\sum_{j=1}^l\alpha_j\nu(x_{i_j})$, \quad $i_1<\cdots
<i_l$\quad and\quad  $0\leq \alpha_j \leq p-1$. \item [(ii)] The
set $\{\; y \; \vert\;  \nu(y)=n \;\}$ is an $F$--basis of
$\omega(L)^n$ modulo $\omega(L)^{n+1}$.
\end{enumerate}
\end{lemma}

For a subset $S$ of $L$ we shall denote by $\langle S \rangle_p$
the restricted subalgebra generated by $S$. Moreover, if $z\in L$
is $p$--nilpotent, the minimal positive
integer $n$ such that $z^{[p]^n}=0$ is called the
\emph{exponent} of $z$ and denoted by $e(z)$.

\medskip
\section{Proofs}

\begin{proof}[Proof of the Theorem 1.] Assume first that
$L=\bigoplus_{i=1}^n \langle x_i \rangle_p$. Then, by the PBW
Theorem for restricted Lie algebras (see
\cite{Strade_Farnsteiner}, Chapter $2$, Theorem $5.1$), we see that $u(L)$
is isomorphic to the truncated polynomial algebra
\[
F[X_1,\ldots,X_n]/(X_1^{p^{e(x_1)}},\ldots,X_n^{p^{e(x_n)}}).
\]
Consequently  the algebra $u(L)$ has an f.m. basis.

Conversely, suppose that $u(L)$ has an f.m. basis $\mathfrak{B}_1$
with $1\in \mathfrak{B}_1$ and put
$\mathfrak{B}=\mathfrak{B}_1\backslash \{1\}$. Let $n=\dim_F L/L^{[p]}$ and
$\mathfrak{B}\backslash  \omega(L)^2=\{b_1,\ldots,b_n\}$.
By Lemma \ref{L:1}, one has $b_i=x_i+h_i$, where $x_i\in L\backslash L^{[p]}$ and
$h_i\in \omega(L)^2$ for every $i=1,\ldots,n$. From
\cite{Lincoln_Towers} it follows at once that $\{x_1,\ldots, x_n\}$
is a minimal set of generators of $L$ as a restricted
subalgebra. We shall prove by induction on
$e=\max \{i \vert\, L^{[p]^i} \neq 0\}$ that $L$ has a cyclic decomposition.
If $e=1$, then $L= \bigoplus_{i=1}^n \langle x_i \rangle_p$.

Now let $e>1$ and suppose that $L$ does not decompose as a direct sum of
restricted subalgebras. Since $L= \sum_{i=1}^n \langle x_i \rangle_p$ and the $p$-map is
$p$-semilinear, note that $L^{[p]^e}$ is just the vector subspace generated by the $p^e$--th powers
of the generators $x_i$ having exponent $e+1$. Therefore, without loss  of generality we can assume that
$$
e+1=e(x_1)=\dots=e(x_m)\geq e(x_{s}),\qquad\quad (m+1\leq s\leq  n)
$$
and  $\{x_1^{[p]^e},\dots,x_m^{[p]^e}\}$ is an $F$-linearly independent set with
$$
\mathrm{span}_F \{x_1^{[p]^e},\dots,x_m^{[p]^e}\}= L^{[p]^e}.
$$
In turn, we can reindex  the elements $x_{m+1},\dots,x_n$ so that there exists
a maximal $m\leq k< n$ such that
$$
H=\langle x_1,\ldots,x_m,\ldots,x_k \rangle_p = \bigoplus_{i=1}^k \langle y_i \rangle_p
$$
for suitable $y_{1},\ldots,y_k$ in $L$ with $y_i=x_i$ for $i=1,\ldots,m$.
Consequently, for every $s>k$ there exists a minimal number $f_s$ such that
\begin{equation}\label{E:1}
x_s^{p^{f_s}}=\sum_{i=1}^{k} \sum_{j=0}^{e(y_i)-1} \mu^{(s)}_{i,j}y_i^{[p]^j}\qquad\quad  (\mu^{(s)}_{i,j}\in F).
\end{equation}
Denote by $J$ the associative ideal of $u(L)$ generated by the elements $b_1^{p^e},\ldots,b_m^{p^e}$.
Clearly $J\subseteq \omega(L)^{p^e}\subseteq L^{[p]^e}u(L)$. Suppose by
contradiction that $J\neq L^{[p]^e}u(L)$. If  $r$ is  the maximal positive integer such that
$$
\left( L^{[p]^e}u(L)\cap \omega(L)^r \right) \backslash J \neq \emptyset,
$$
then  there exists $v=x_i^{p^e}x_1^{a_1}\cdots x_n^{a_n}\in \omega(L)^r\setminus J$ such that
$$
v \equiv b_i^{p^e}b_1^{a_1}\cdots b_n^{a_n}\pmod{\omega(L)^{r+1}}.
$$
Consequently
$$
v-b_i^{p^e}b_1^{a_1}\cdots b_n^{a_n}\in \left( L^{[p]^e}u(L)\cap \omega(L)^{r+1} \right) \backslash J,
$$
contradicting  the definition of $r$.
Therefore  $J=L^{[p]^e}u(L)$, which implies that  $u(L)/J \cong u(L/L^{[p]^e})$. Moreover, it is easily seen that $\mathfrak{B}_1\cap J$ is an $F$-basis of $J$,  hence the elements $b_i+J$ with $b_i\notin J$ form an f.m. basis of $u(L)/J$. Consequently, by induction we have that $\mathfrak{L}= L/L^{[p]^e}$ is a direct sum of restricted subalgebras.

As the images of $y_1,\ldots,y_k$ are $F$-linearly independent in $\mathfrak{L}/\Phi(\mathfrak{L})$,
from \cite{Lincoln_Towers} it follows that there exists a restricted subalgebra  $P$ of $L$ with $L^{[p]^e}\subseteq P$  such that
$$
\mathfrak{L}=H/L^{[p]^e}\oplus P/L^{[p]^e}.
$$
As a consequence, for every $s>k$ we have
$ x_s\equiv v_s + w_s \pmod{L^{[p]^e}}
$ with $v_s\in H$ and $w_s\in P$ and, moreover, it follows from (\ref{E:1})  that  $w_s^{[p]^{f_s}}\in L^{[p]^e}$.
One has
\begin{equation}\label{R:2}
v_s=\sum_{i=1}^{k} \sum_{j=0}^{e(y_i)-1} k_{i,j}^{(s)}y_i^{[p]^j},\qquad \qquad (k_{i,j}^{(s)} \in F).
\end{equation}
Since $x_s^{[p]^{f_s}} \equiv v_s^{[p]^{f_s}}\pmod{L^{[p]^e}}$,
we conclude that $\mu^{(s)}_{i,j}\in F^{p^{f_s}}$ provided $j<e$.
 We claim that  for every $1\leq i\leq k$ the coefficient  $\mu^{( s)}_{i,e}$ is also in $F^{p^{f_s}}$.
Indeed,  write
$$
w_{ s}=\sum_{b\in \mathfrak{B}} \lambda_b b\qquad (\text{for suitable}\quad  \lambda_b\in F).
$$
Then, as $\mathfrak{B}$ is a filtered multiplicative basis of $u(L)$, it follows that
\begin{equation}\label{R:1}
w_{ s}^{p^{f_s}}=\sum_{b\in \mathfrak{C}} \mu_b^{p^{f_s}} b^{p^{f_s}}
\end{equation}
where $ \mathfrak{C}$ is a subset of  $\mathfrak{B}$ and the $\mu_b$'s are nonzero elements of $F$.
Moreover, since $w_{s}^{[p]^{f_s}}\in L^{[p]^e}$ we have
\begin{equation}\label{S:1}
w_{ s}^{p^{f_s}}=\sum_{i=1}^m \alpha_i b_i^{[p]^e}.
\end{equation}
As $\mathfrak{B}$ is a filtered $F$-basis of $u(L)$, by comparing (\ref{R:1})
and (\ref{S:1}) we conclude that for every $i=1,2,\ldots,m$ there exists $\beta_i\in F$
such that $\alpha_i=\beta_i^{p^{f_s}}$.
At this stage, the relations (\ref{E:1}) and (\ref{R:2}) allows to conclude that for every $1\leq i\leq k$ one has
$$
 \mu^{(s)}_{i,e}= \left( k_{i,e}^{(s)}\right)^{p^{f_s}}+ \beta_i^{p^{f_s}} \in F^{p^{f_s}},
$$
as desiderate (here $\beta_{m+1},\ldots, \beta_{k}=0$).

Now, by (\ref{E:1}) and the above discussion we have  $x_{s}^{p^{f_s}}=z^{p^{f_s}}$ for some $z\in H$. Therefore $(x_{\bar s}-z)^{p^{f_s}}=0$ and then the minimality of $f_s$ forces $\langle x_{\bar s}-z\rangle_p\cap H=0$. This contradicts the definition of $k$, yielding the claim.

Finally, if $F$ is perfect, then $L$ decomposes as a direct sum of cyclic
restricted subalgebras (see e.g. \cite{Bahturin_Zaicev}, Chapter
4, Theorem in Section 3.1). The proof is done.
\end{proof}

\newpage
Unlike   group algebras,  a commutative restricted enveloping
algebra need not have an f.m. basis. Indeed, we have the following

\smallskip
{\bf Example.} Let $F$ be a field of positive
characteristic $p$ containing an element $\alpha$
which is not a $p$-th root in $F$. Consider the abelian
restricted Lie algebra
$$
L_\alpha= Fx+Fy + Fz
$$
with $x^{[p]}=\alpha z$,
$y^{[p]}=z$, and $z^{[p]}=0$.
Suppose  that $u(L_\alpha)$ has an f.m. basis. By
Theorem \ref{T:1}, $L_\alpha$ is a direct sum of cyclic restricted
subalgebras. Since $L_\alpha^{[p]}\neq 0$ and
$L_\alpha^{[p]^2}=0$, we have
$
L_\alpha=\langle a \rangle_p \oplus \langle b \rangle_p$
with $e(a)=2$ and $e(b)=1$.

Let  $b=k_1x+k_2y+k_3z$, $k_i\in F$. Since $\alpha \notin
F^p$, we get $0=b^{[p]}=(k_1^p \alpha+k_2^p)z$,  so  $k_1=k_2=0$
and  $0\neq a^{[p]}\in Fz =\langle b \rangle_p$, a contradiction.

\begin{lemma}\label{L:2}
Let $A$ be a finite-dimensional nilpotent associative algebra over a field $F$. Suppose that
$A$ has a minimal set of generators $\{u_1,\ldots,u_n \}$ such that:
\begin{itemize}
\item [(i)]\quad  $[u_i,u_j]\in A^3$ \quad for every\quad $i,j=1,\ldots,n$;
\item[(ii)] \quad $u_iu_j\notin A^3$ \quad for every\quad $i,j=1,\ldots,n$;
\item [(iii)]\quad  $\emph{span}_F\{u_iu_j \vert
1\leq i<j \leq n\} \cap \emph{span}_F \{u_i^2 \vert i=1,\ldots,n \}
\subseteq A^3$.
\end{itemize}
Then $A$ has no f.m. basis.
\end{lemma}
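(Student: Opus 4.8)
The plan is to derive a contradiction by assuming $A$ has an f.m. basis $\mathfrak{B}$ and analyzing how the basis must behave modulo $A^3$. First I would use property (F-II) together with Lemma~\ref{L:1}-type reasoning: since $\{u_1,\dots,u_n\}$ is a minimal generating set, each $u_i \notin A^2$, and the elements of $\mathfrak{B}\setminus A^2$ must biject (modulo $A^2$) with a basis of $A/A^2$. Thus after replacing the $u_i$ by suitable basis elements (or by absorbing the $u_i$ into $\mathfrak{B}$), I may assume that the elements $b_1,\dots,b_n \in \mathfrak{B}$ with $b_i \equiv u_i \pmod{A^2}$ form the part of $\mathfrak{B}$ lying outside $A^2$. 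The key structural fact I would exploit is hypothesis (i): since $[u_i,u_j]=u_iu_j - u_ju_i \in A^3$, we have $b_ib_j \equiv b_jb_i \pmod{A^3}$ for all $i,j$.

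The crux of the argument is to count products modulo $A^3$. By hypothesis (ii), every product $b_ib_j$ is nonzero modulo $A^3$, so by the multiplicative property of $\mathfrak{B}$ each $b_ib_j$ lies in $\mathfrak{B}$ and, by (F-I) with $n=2$, lies in $\mathfrak{B}\cap A^2$. The heart of the matter is that the commutativity mod $A^3$ forces $b_ib_j$ and $b_jb_i$ to be \emph{equal} as basis elements: indeed both lie in $\mathfrak{B}\setminus A^3$ and are congruent modulo $A^3$, so (F-II) gives $b_ib_j = b_jb_i$ for $i\neq j$. This is the essential collapse that the hypotheses are engineered to produce. Consequently the distinct basis elements among the products $\{b_ib_j\}$ are the $n$ squares $b_i^2$ together with the $\binom{n}{2}$ unordered products $b_ib_j$ ($i<j$), and these $\binom{n}{2}+n = \binom{n+1}{2}$ elements all lie in $\mathfrak{B}\cap A^2$ and are distinct modulo $A^3$ (again by (F-II), two of them coincide only if equal).

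Now I would bring in hypothesis (iii) to reach the contradiction on dimensions. The images of the $b_ib_j$ ($i<j$) and the $b_i^2$ in $A^2/A^3$ span the image of $A^2$ there, and (iii) asserts that $\mathrm{span}_F\{u_iu_j : i<j\}$ and $\mathrm{span}_F\{u_i^2\}$ meet only inside $A^3$; since $b_i \equiv u_i \pmod{A^2}$ one checks the same separation holds for the $b$'s modulo $A^3$. But a filtered multiplicative basis requires $\mathfrak{B}\cap A^2$ to be \emph{linearly independent} modulo $A^3$ (being part of a basis of $A^2$ by (F-I), it is independent in $A^2/A^3$). The relation making this fail is the single linear dependence forced by the collapse $b_ib_j=b_jb_i$ combined with the squares: concretely, I expect that the $\binom{n+1}{2}$ purported distinct basis products cannot all be independent modulo $A^3$ once one writes a symmetric product in two ways, contradicting (F-I)/(F-II).

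The main obstacle I anticipate is the bookkeeping in the second step: I must argue carefully that after the normalization the products $b_ib_j$ really do represent \emph{distinct} basis vectors precisely when the corresponding $u_iu_j$ are distinct modulo $A^3$, and that no coincidental cancellation among different products hides the dependence. In other words, the delicate point is translating the three hypotheses on the $u_i$ (which live in $A$) into the corresponding statements for the genuine basis elements $b_i$ (and their products) modulo $A^3$, and then pinning down exactly which equality of basis products, guaranteed by (F-II) via hypothesis (i), conflicts with the independence of $\mathfrak{B}\cap A^2$ in $A^2/A^3$ that (iii) is designed to obstruct.
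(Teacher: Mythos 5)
There are two genuine gaps. First, the normalization $b_i\equiv u_i\pmod{A^2}$ is not available to you: the basis $\mathfrak{B}$ is given, and all you know is that $b_k\equiv\sum_{i}\alpha_{ki}u_i\pmod{A^2}$ for an invertible transition matrix $M=(\alpha_{ki})$. Hypotheses (ii) and (iii) concern the particular generators $u_i$ and are not invariant under such a linear change: a product
$b_rb_s\equiv\sum_i\alpha_{ri}\alpha_{si}u_i^2+\sum_{i<j}(\alpha_{ri}\alpha_{sj}+\alpha_{rj}\alpha_{si})u_iu_j\pmod{A^3}$
could a priori fall into $A^3$ even though no $u_iu_j$ does. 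Ruling this out is the technical core of the paper's proof: if $b_rb_s\in A^3$, then (ii) and (iii) force $\alpha_{ri}\alpha_{si}=0$ and $\alpha_{ri}\alpha_{sj}+\alpha_{rj}\alpha_{si}=0$ for all $i,j$, hence every order-two minor formed from rows $r$ and $s$ of $M$ vanishes, contradicting $\det M\neq0$. Your proposal assumes this step away by fiat, and it is exactly the point where (ii) and (iii) are consumed.

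Second, the endgame is wrong. Once $b_rb_s,b_sb_r\notin A^3$ and $b_rb_s\equiv b_sb_r\pmod{A^3}$, property (F-II) does give the equality $b_rb_s=b_sb_r$ of basis elements; but an equality of two products creates no linear dependence among the elements of $\mathfrak{B}\cap A^2$ modulo $A^3$ --- it only says the list of products has repetitions --- so there is no conflict with (F-I) and no dimension count to run. Hypothesis (iii) is not the engine of a final independence argument; it has already been used in the step above. The actual contradiction is immediate and of a different nature: the $b_i$ generate $A$ as an associative algebra and pairwise commute, so $A$ is commutative, which is incompatible with the intended setting (the lemma is applied to $\omega(L)$ for nonabelian $L$; for a genuinely commutative $A$ hypothesis (i) is vacuous and the stated conclusion would in fact fail, e.g.\ for a truncated polynomial algebra). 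Your proposed count of $\binom{n+1}{2}$ ``distinct'' products modulo $A^3$ does not lead to a contradiction.
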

\begin{proof} By contradiction, assume that there exists  an f.m. basis
$\mathfrak{B}$ of $A$. Clearly, we have $\dim_F A/A^2=n$ and, by
property (F-I), $\mathfrak{B}\backslash  \omega(L)^2$ is a minimal
set of generators of $A$ as an associative algebra. Write
$\mathfrak{B}\backslash  A^2=\{b_1,\ldots,b_n\}$. Obviously
$$
b_k \equiv \sum_{i=1}^n \alpha_{ki}u_i \pmod{A^2}, \qquad
\qquad(\alpha_{ki} \in F)
$$
 and  the determinant of the matrix $M=(a_{ki})$ is not zero. Now
\[
\begin{split}
b_r b_s \equiv \sum_{i=1}^n \alpha_{ri}\alpha_{si}u_{i}^2&+
\sum_{\begin{subarray}{c}i,j=1\\  i<j \end{subarray}}^n
(\alpha_{ri}\alpha_{sj}+\alpha_{rj}\alpha_{si})u_{i}u_{j}\\
&-\sum_{\begin{subarray}{c}i,j=1\\  i<j \end{subarray}}^n \alpha_{rj}\alpha_{si}[u_i,u_j]
\pmod{A^3}.
\end{split}
\]
By  assumption (i) of the statement we have that $[u_i, u_j] \equiv 0 \pmod{A^3}$, so
\begin{equation}\label{E:4}
\begin{split}
b_r b_s &\equiv
\sum_{i=1}^n \alpha_{ri}\alpha_{si}u_{i}^2\\
&+\sum_{\begin{subarray}{c}i,j=1\\  i<j \end{subarray}}^n
(\alpha_{ri}\alpha_{sj}+\alpha_{rj}\alpha_{si})u_{i}u_{j}
 \equiv b_sb_r \pmod{A^3}.
 \end{split}
\end{equation}
Suppose $b_rb_s \in A^3$ for some $r,s$.  Because of (\ref{E:4}) and the assumptions (ii) and (iii) of the statement we have  $\alpha_{ri}\alpha_{si}=0$ and
$\alpha_{ri}\alpha_{sj}+\alpha_{si}\alpha_{rj}=0$ for every
$i,j$. It follows that $\alpha_{ri}\alpha_{sj}-\alpha_{si}\alpha_{rj}=0$.
Consequently, all of the order two minors  formed by the $k$-th and $s$-th
lines of the matrix $M$ are zero, which is impossible
because $\det M \not=0$. Hence\quad  $b_r b_s, b_s b_r \notin \omega(L)^3$
\; and\;  $b_r b_s \equiv b_s b_r \pmod{\omega(L)^3}$ \quad for every $r,s$. By
property (F-II) of the f.m. bases we conclude that\;  $b_r b_s = b_s
b_r$. Thus $A$ is a commutative algebra, a contradiction.
\end{proof}

\begin{proof}[Proof of the Theorem 2.] Let $S$ be a minimal set
of generators of $L$ as a restricted Lie algebra. Then, as $L$ is
powerful, by Lemma \ref{L:1} we conclude that $S$ is a minimal set of the nilpotent
associative algebra $\omega(L)$ satisfying the hypotheses of Lemma
\ref{L:2}, and the claim follows.
\end{proof}

\begin{proof}[Proof of the Theorem 3.] Suppose, by contradiction,
that $u(L)$ has an f.m. basis $\mathfrak{B}_1$ with $1\in \mathfrak{B}_1$, so that $\mathfrak{B}=\mathfrak{B}_1\backslash \{1\}$ is an f.m. basis of $\omega(L)=\rad(u(L))$. Put
$n=\dim_F\mathfrak{D}_1(L)/\mathfrak{D}_2(L)$ and write $\mathfrak{B}\backslash
\omega(L)^2=\{b_1,\ldots,b_n\}$. Consider  an $F$--basis $B$ of
$L$ as in the statement of Lemma  \ref{L:1} and let
$u_1,\ldots,u_n$ be the elements of $B$ having height $1$. Thus,
by Lemma \ref{L:1}(ii), the set\quad
$
\{u_j+\omega(L)^2  \vert\,  j=1,\ldots,n\}
$\quad
forms an $F$--basis of $\omega(L)/\omega(L)^2$. Then, for every
$k=1,\ldots,n$ there exist $\alpha_{k1},\ldots,\alpha_{kn} \in
F$ such that
$$
b_k \equiv \sum_{i=1}^n \alpha_{ki}u_i  \pmod{\omega(L)^2},\qquad\qquad (k=1,\ldots,n).
$$
Set $\bar{u}_k=\sum_{i=1}^n \alpha_{ki}u_i$. Plainly,
$\{\bar{u}_1,\ldots,\bar{u}_n \}$ is an $F$-linearly independent
set which generates $L$ as a restricted subalgebra.

Now, if $L$ is powerful then, by Theorem  \ref{T:2}, $u(L)$ cannot
have any f.m. basis, a contradiction. Therefore $L^\prime
\nsubseteq L^{[p]}$ and so there exist $1\leq r< s \leq n$ such
that the element $c_{rs}=[\bar{u}_r,\bar{u}_s]$ is not in
$L^{[p]}$. Since $L$ is nilpotent of class 2, we have that
$$
\mathfrak{D}_2(L)= L^\prime+L^{[p]} \supset L^{[p]}=\mathfrak{D}_3(L),
$$
hence  $c_{rs}$ has height two. Furthermore, one has
\[
\begin{split}
b_s^2 &\equiv \bar{u}^2_s \pmod{\omega(L)^3};  \qquad
b_sb_r \equiv \bar{u}_r \bar{u}_s - c_{rs} \pmod{\omega(L)^3}.
\end{split}
\]
Since $L$ is nilpotent of class 2, it follows that
\[
\begin{split}
b_rb_s^2 &\equiv \bar{u}_r \bar{u}^2_s \pmod{\omega(L)^4};\\
b_su_rb_s &\equiv \bar{u}_r \bar{u}^2_s - \bar{u}_sc_{rs}
\pmod{\omega(L)^4};\\
b_s^2 b_r&\equiv \bar{u}_r \bar{u}^2_s -[\bar{u}_r,
\bar{u}^2_s]=\bar{u}_r \bar{u}^2_s -2\bar{u}_s c_{rs}\pmod{\omega(L)^4}.\\
\end{split}
\]
Therefore the elements
$$
v_1=b_rb_s^2,\quad  v_2=b_s^2 b_r\quad  \text{and} \quad v_3=b_sb_rb_s
$$
are $F$-linearly dependent modulo $\omega(L)^4$.
In view of property (F-I),
$$ ({\mathfrak B}\cap
\omega(L)^3)\backslash \omega(L)^4
$$ is an $F$-basis for
$\omega(L)^3$ modulo $\omega(L)^4$. Consequently, it follows that
either $v_i\in \omega(L)^4$ for some $i\in \{1,2,3\}$ or
$v_j\equiv v_k(\textrm{mod}\, \omega(L)^4)$ for some $j,k \in
\{1,2,3\}$. In each case we have a contradiction to Lemma
\ref{L:1}, and the proof is complete.
\end{proof}

We remark that the previous result fails without the assumption on the characteristic of the ground field. Indeed, let $F$ be a field of characteristic 2 and consider the restricted Lie algebra $L=Fa+Fb+Fc$ with $[a,b]=c$, $[a,c]=[b,c]=0$, and $a^{[2]}=b^{[2]}=c^{[2]}=0$.
Then it is straightforward to show that
$$
\{1,a,b,ab,ab+c,ac,bc,abc\}
$$
is an f.m. basis of $u(L)$.

\smallskip

{\it Acknowledgement.} The authors are grateful to the referee for pointing out a problem
in the original proof of Theorem 1.

\newpage

\bibliographystyle{abbrv}
\bibliography{FMB}

\end{document}